\newtheorem{lemma}{Lemma}
\newtheorem{rem}{Remark}
\title{\LARGE \bf
Controlling Linear Networks with Minimally Novel Inputs}
\author{Gautam Kumar, Delsin Menolascino, MohammadMehdi Kafashan and ShiNung Ching$^{1}$ % <-this % stops a space
\thanks{Gautam Kumar is with the Department of Electrical and Systems Engineering, Washington University in Saint Louis, Saint Louis, MO 63130, U.S.A.,
        {\tt\small gak206@gmail.com}}
\thanks{Delsin Menolascino is with the Department of Electrical and Systems Engineering, Washington University in Saint Louis, Saint Louis, MO 63130, U.S.A.,
        {\tt\small delsin@ese.wustl.edu}}
\thanks{MohammadMehdi Kafashan is with the Department of Electrical and Systems Engineering, Washington University in Saint Louis, Saint Louis, MO 63130, U.S.A.,
        {\tt\small kafashan@ese.wustl.edu}}
\thanks{$^{1}${\it Corresponding author:} ShiNung Ching is with the Department of Electrical and Systems Engineering $\&$ Division of Biology and Biomedical Sciences, Washington University in Saint Louis, Saint Louis, MO 63130, U.S.A.
        {\tt\small shinung@ese.wustl.edu}}
}
\begin{document}

\maketitle
\thispagestyle{empty}
\pagestyle{empty}

\begin{abstract}
In this paper, we propose a novelty-based metric for quantitative characterization of the controllability of complex networks. This inherently bounded metric describes the average angular separation of an input with respect to the past input history.  We use this metric to find the minimally novel input that drives a linear network to a desired state using unit average energy.  Specifically, the minimally novel input is defined as the solution of a continuous time, non-convex optimal control problem based on the introduced metric.  We provide conditions for existence and uniqueness, and an explicit, closed-form expression for the solution.  We support our theoretical results by characterizing the minimally novel inputs for an example of a recurrent neuronal network.     
  
\end{abstract}

\section{Introduction}

\noindent In its most basic form, the systems-theoretic notion of controllability carries a binary definition: a dynamical system either is, or is not, controllable, with respect to its exogenous inputs. Naturally, such a notion has the deficiency of not grading the ease or difficulty associated with effecting such control. To obviate this issue, consistent research effort has been directed at the characterization of controllability using systems-theoretic metrics. Roughly, these metrics can be grouped into two categories

\begin{enumerate}
	\item Those that characterize the minimum energy parametric perturbations that result in a loss of controllability \cite{Hu2001,HD04}.  These are related to basic characterizations of the robustness of linear systems \cite{hinrichsen1989real}.
	\item Those that characterize the controllability of a system in terms of the minimum energy excitation required to achieve a unit length state trajectory \cite{YRCHL12,PZB14,FZ14,CSL14}.  
\end{enumerate}

%The characterization of controllability in complex networks is a rapidly emerging area of research interest. Strictly speaking, controllability is a binary notion; either a network is, or is not, controllable, depending on both the network structure and node dynamics. It is a control-theoretic truism, however, that just because a system is controllable, does not mean it is easy to control in practice. To add gradation to the notion of controllability, recent effort has been directed at developing so-called controllability metrics. These metrics can be grouped into two categories:
%\begin{enumerate}
%	\item Those that characterize the minimum energy parametric perturbation that result in a loss of controllability \cite{HD04}.
%	\item Those that characterize the controllability of a system in terms of the minimum energy excitation required to drive the system to a desired state \cite{YRCHL12,PZB14,FZ14,CSL14,GPCGB14}.  
%\end{enumerate}
     
The latter, in particular, is a natural paradigm that is directly relatable to the celebrated Kalman rank condition (or the controllability gramian) used to ascertain the controllability of linear systems \cite{KS72}. Recently, energy-based controllability metrics have been successfully used in the emerging domain of network science to assess the putative controllability of large-scale linear systems, formulated as complex networks of various topologies \cite{FZ14,PZB14}. However, for complex networks in general and, in particular, for biological neuronal networks, an energy-based metric offers insight into only one aspect of the overall system's controllability.

\begin{figure}
	\centering
		\includegraphics[width=0.40\textwidth]{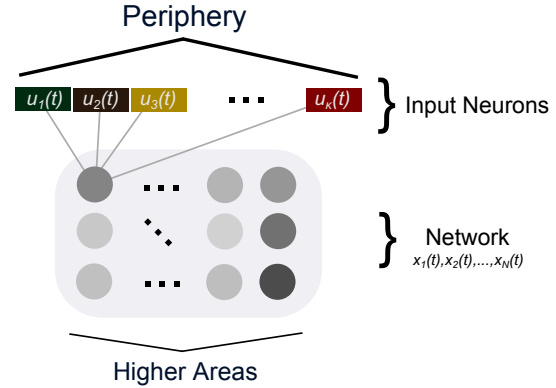}
	\caption{Prototypical structure of a sensory neuronal network. Sensory neurons are tuned to features from the sensory periphery. These neurons project excitation onto a network that performs intermediate transformations on the afferent excitation en route to higher brain regions.}
	\label{fig1}
\end{figure}

We appeal, specifically, to the domain of neural coding and the dynamics of sensory neural circuits. Consider the simple, prototypical layered model of a sensory network shown in Figure \ref{fig1}, wherein sensory neurons are tuned to a high dimensional feature space (i.e., environmental variables from the sensory periphery; say, different molecules corresponding to tastes). Those sensory neurons impinge on a complex, interconnected sensory network that performs intermediate transformations en route to higher brain areas.

One may put forth a supposition that the `controllability' of such a sensory network, with respect to the afferent input from the sensory neurons, is critical in mediating the ability to perceive minute changes in the environment. But as much as energy is important is mediating such a response, \textit{orientation}, i.e., the alignment of an input with certain features, may be even more so. Indeed, a weak, but highly \textit{novel} input may be more easily perceived than an intense, but more familiar, stimulus. The ability to assess the responsiveness of neuronal networks to novelty -- at a particular moment in time, relative to past inputs -- has immediate implications in the analysis and control of biophysiological neuronal network dynamics in different behavioral and clinical regimes \cite{Ching2013,Ching2012,Lepage2013a}.

Here, as a first step, we seek to characterize the controllability of linear systems (linear networks) possessing high dimensional input-spaces, with respect to input novelty. In particular, we ask how responsive are the state (node) trajectories to inputs that differ in orientation from those that have previously been applied. Figure \ref{fig2} illustrates the basic notion of input novelty for a simple two-dimensional linear system with three inputs. A particular input drives the system to an intermediate point in the phase space; from this point emerge two trajectories, both of which reach a common endpoint; one minimizes input novelty (note the similarity between the input from $t\in [0,2]$ and that from $t\in [2,4]$), the other minimizes energy. 

\begin{figure}[t]
\centering%
\includegraphics[width=0.48\textwidth]{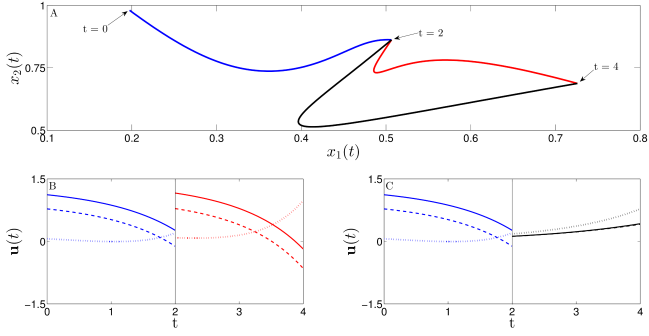}
\caption{Minimum novelty control vs. minimum energy control: (A) The trajectory (blue) brings the system from an initial state on intermediate state at $t=2s$.  Subsequently, two trajectories are contrasted in the phase-plane for the minimum novelty control (red) and the minimum energy control (black). (B) The minimally novel inputs (from $t=2s$ to $t=4s$) (red) designed using our approach in this paper. (C) The inputs corresponding to the minimum energy trajectory (from $t=2s$ to $t=4s$), (black).}
\label{fig2}
\end{figure}  

%Specifically, we analytically derive the minimum novelty control for linear networks by formulating a non-convex optimization problem. The problem seeks the minimum angular separation, defined in terms of an inner product in the input space, required in order to create a desired change in the network’s trajectory in the presence of unit average energy available to the system. The resulting cost, termed as the control `agility' metric, describes the change in input orientation that is required to re-orient the network itself.

Specifically, we: (i) analytically derive the \textit{minimum novelty control} for linear networks by formulating a non-convex optimization problem. The problem seeks the minimum angular separation, defined in terms of an inner product in the input feature space, required in order to create a desired change in the network trajectory, constrained by a fixed average input energy; and (ii) characterize the resulting cost -- the control `novelty' -- that describes the change in input orientation that is required to drive the system to a given state.

The remaining paper is as follows. In section \ref{sec2}, we introduce our inner-product based controllability metric for linear networks and formulate a non-convex optimal control problem that minimizes this metric under the constraint of unit average energy. In section \ref{sec3}, we establish the existence and the uniqueness of a global optimal solution of the control problem and derive a closed-form expression for minimally novel inputs. Finally, in section \ref{sec4}, we consider a linearized firing rate model of a recurrent neuronal network as an example to demonstrate our theoretical results.  The paper concludes with a summary and discussion of future work.

\section{Problem Formulation}\label{sec2}
\subsection{Mathematical notation}
\noindent Most notation is standard and will be introduced as the results are developed. We use lower-case letters to represent scalars, boldface lower-case letters to represent vectors, capital letters to represent matrices. Exceptions are $T$, $\mathbb{J}(T)$ and $\mathbb{J}_{1}(T)$, which we represent as scalars. We use $\mathbb{R}^{n\times 1}$ to represent the space of $n$- dimensional vectors with their elements as real numbers. Similarly, we use $\mathbb{R}^{n\times m}$ and $\mathbb{R}_{+}^{n\times m}$ to represent the space of $n\times m$ dimensional matrices with their elements as real numbers and non-negative real numbers respectively. $\|\mathbf{x}\|_{2}$ is the Euclidean norm of the vector $\mathbf{x}$. $\mathbf{x}^{'}$ is the transpose of a vector $\mathbf{x}$ and $A^{-1}$ is the inverse of a matrix $A$.   

\subsection{Input novelty based controllability metric}
\noindent We consider a linear, time invariant system with dynamics of the form
\begin{equation}
	\frac{\rm{d}\mathbf{x}(t)}{\rm{d}t} = A \mathbf{x}(t) + B \mathbf{u}(t)
\label{eq1}
\end{equation} 
Here $\mathbf{x}(t)\in \mathbb{R}^{n\times 1}$ represents the state of the system at time $t$, $A\in \mathbb{R}^{n\times n}$ is the state transition matrix, $B\in \mathbb{R}^{n\times m}$ is the input matrix, and $\mathbf{u}(t)\in \mathbb{R}^{m\times 1}$ is the input to the system. Without loss of generality, we say that (\ref{eq1}) describes the time evolution of linear networks in the presence of external inputs.

Let us assume an input $\mathbf{v}(t-T) \in \mathbb{R}^{m\times 1}$, $t\in[0,T]$, with total energy $T$, i.e.
\begin{equation}
\frac{1}{T}\int_{0}^{T}\|\mathbf{v}(t-T)\|_{2}^{2}\rm{d}t = 1 
\label{eq2}
\end{equation}
We assume that $\mathbf{v}(t-T)$ can drive $\mathbf{x}(t)$ from $\mathbf{x}(-T)$ to $\mathbf{x}(0)$, where $\|\mathbf{x}(0)\|_{2} = 1$, subject to the dynamics (\ref{eq1}). Here $T>0$ is a constant. We introduce the inner-product based metric   

\begin{equation}
	\mathbb{J}(T) = \frac{1}{T}\int_{0}^{T} \mathbf{v}^{'}(t-T)\mathbf{u}(t)\rm{d}t
\label{eq3}
\end{equation}

\noindent where 
\begin{equation}
\frac{1}{T}\int_{0}^{T}\|\mathbf{u}(t)\|_{2}^{2}\rm{d}t = 1,
\label{eq4}
\end{equation}
\noindent to measure the novelty of a subsequent input $u(t)$, $t\in[0,T]$, relative to $v(t-T)$, required in order to reach the state $x(T)$, where $\|\mathbf{x}(T)\|_{2} = 1$. In other words for a fixed input energy, the metric $\mathbb{J}(T)$ measures the required directional change in inputs (thus novelty) to achieve a given state (or, equivalently, directional) change in the state of the system. 

\begin{rem}\label{rem1}
It is readily evident that $\mathbb{J}(T)\in [-1,1]$. 
\end{rem}

%to measure the agility of the system (\ref{eq1}) in changing its state $\mathbf{x}(t)$ from $\mathbf{x}(0)$ to $\mathbf{x}(T), \|\mathbf{x}(T)\|_{2} = 1$ in the presence of a novel input $\mathbf{u}(t), t\in[0,T]$. In other words for a fixed energy available to the system, the metric $\mathbb{J}(T)$ measures the required directional change in inputs (thus novelty) to achieve a given directional change in the state of the system. Clearly, $\mathbb{J}(T)\in [-1,1]$. 

\begin{rem}\label{rem2}
From (\ref{eq3}), we note that the novelty of the input $\mathbf{u}(t)$ compared to $\mathbf{v}(t-T)$ decreases as $\mathbb{J}(T)$ increases and is minimum when $\mathbb{J}(T) = 1$ i.e. when $\mathbf{u}(t) = \mathbf{v}(t-T)$ for all $t\in[0,T]$.
\end{rem}

\begin{rem}\label{rem3}
	 We observe that, due to the energy normalization in (\ref{eq2}) and (\ref{eq4}),
\begin{equation}
\frac{1}{T}\int_{0}^{T} \|\mathbf{v}(t-T)-\mathbf{u}(t)\|_{2}^{2}\rm{d}t = 2 (1- \mathbb{J}(T))
\label{eq5}
\end{equation}
\noindent Thus, the average Euclidean distance, i.e. the left hand side of (\ref{eq5}), between two inputs can equivalently be used as an alternate measure of input novelty in our context.   
\end{rem}

\subsection{Minimum novelty problem}
\noindent From the conceptual formulation introduced above, we can develop a control problem to design the minimally novel input $\mathbf{u}(t), t\in[0,T]$ such that a desired directional change in the state of the system can be achieved under the constraint of fixed energy subject to the system dynamics (\ref{eq1}). For this, we formulate the following optimal control problem:

%Our objective here is to design the minimum novel input $\mathbf{u}(t), t\in[0,T]$ such that a desired directional change in the state of the system can be achieved under the constraint of the available fixed energy to the system and the system dynamics (\ref{eq1}). For this, we formulate the following optimal control problem:

\begin{subequations}
\begin{align}
    \min_{\substack{\mathbf{u}(t) \\ t\in[0,T]}}
        & \quad  -\mathbb{J}(T) \label{eq6a} \\
    \textrm{s.t.} 
        & \qquad \frac{1}{T} \int_{0}^{T}\| \mathbf{u}(t)\|_{2}^{2}\rm{d}t = 1 \label{eq6b}\\
        & \quad  \mathbf{x}(T) = e^{AT}\mathbf{x}(0) + \int_{0}^{T}e^{A(T-t)}B\mathbf{u}(t)\rm{d}t  \label{eq6c}
\end{align}
\label{eq6}
\end{subequations}

\noindent It should be noted here that the constraint (\ref{eq6c}) is obtained by integrating (\ref{eq1}) with respect to $t$ over the period of $[0,T]$. Immediately, we note that the quadratic equality constraint (\ref{eq6b}) makes the optimization problem (\ref{eq6}) non-convex. Furthermore, we note that our optimal control problem formulation (\ref{eq6}) is different from the classical minimum effort problems where the $L^{1}$-norm of control inputs is minimized under the constraints of explicit lower and upper bounds on the inputs. 

%\begin{rem}
%	We note here that our optimal control problem formulation (\ref{eq3}) is different from the classical minimum effort problems where $L^{1}$-norm of control inputs is minimized under the constraints of explicit lower and upper bounds on the inputs. 
%\end{rem}

\section{Results}\label{sec3}
\noindent We derive conditions for the existence of a unique global optimal solution of the non-convex optimization problem (\ref{eq6}). Based on this, we provide a closed-form expression for the optimal $\mathbf{u}(t), t\in[0,T]$.

\subsection{Existence of a Minimally Novel Input}
  
\begin{lemma}\label{lem1}
	A solution of the non-convex optimization problem (\ref{eq6}) exists if 
\begin{equation}
	T > \max{\{\mathbf{s}^{'}(T)W_{c}^{-1}(T)\mathbf{s}(T),\mathbf{r}^{'}(T)W_{c}^{-1}(T)\mathbf{r}(T)\}}
\label{eq7}
\end{equation}
\noindent where 
\begin{subequations}
\begin{equation}
\mathbf{s}(T) = \int_{0}^{T}e^{A(T-t)}B\mathbf{v}(t-T)\rm{d}t
\label{eq8a}
\end{equation}
\begin{equation}
\mathbf{r}(T) = \mathbf{x}(T)-e^{AT}\mathbf{x}(0)
\label{eq8b}
\end{equation}
\label{eq8}
\end{subequations}

%$\mathbf{s}(T) = \int_{0}^{T}e^{A(T-t)}B\mathbf{v}(t-T)\rm{d}t$ and $\mathbf{r}(T) = \mathbf{x}(T)-e^{AT}\mathbf{x}(0)$. 

\noindent Here, $W_{c}(T)$ is the controllability gramian at time $T$ and is defined as 
\begin{equation}
	W_{c}(T) = \int_{0}^{T}e^{A(T-t)}BB^{'}e^{A^{'}(T-t)}\rm{d}t
\label{eq9}
\end{equation}
\end{lemma}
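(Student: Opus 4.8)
\emph{Proof proposal.} The plan is to lift (\ref{eq6}) to the Hilbert space $\mathcal{H}=L^{2}([0,T],\mathbb{R}^{m})$ with $\langle\mathbf{f},\mathbf{g}\rangle=\int_{0}^{T}\mathbf{f}^{'}(t)\mathbf{g}(t)\,\mathrm{d}t$, and to show that the objective $\mathbb{J}(T)$ attains its maximum over the feasible set. Introduce the bounded linear reachability operator $L:\mathcal{H}\to\mathbb{R}^{n}$, $L\mathbf{u}=\int_{0}^{T}e^{A(T-t)}B\mathbf{u}(t)\,\mathrm{d}t$, whose adjoint acts by $(L^{*}\mathbf{c})(t)=B^{'}e^{A^{'}(T-t)}\mathbf{c}$ and which satisfies $LL^{*}=W_{c}(T)$ from (\ref{eq9}). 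Writing $\bar{\mathbf{v}}(t):=\mathbf{v}(t-T)$, the normalizations (\ref{eq2}), (\ref{eq4}) read $\|\bar{\mathbf{v}}\|^{2}=T$ and $\|\mathbf{u}\|^{2}=T$, the terminal constraint (\ref{eq6c}) reads $L\mathbf{u}=\mathbf{r}(T)$ with $\mathbf{r}(T)$ as in (\ref{eq8b}), and $-\mathbb{J}(T)=-\tfrac{1}{T}\langle\bar{\mathbf{v}},\mathbf{u}\rangle$. Thus (\ref{eq6}) is equivalent to maximizing the bounded linear functional $\mathbf{u}\mapsto\tfrac{1}{T}\langle\bar{\mathbf{v}},\mathbf{u}\rangle$ over $F:=\{\mathbf{u}\in\mathcal{H}:L\mathbf{u}=\mathbf{r}(T),\ \|\mathbf{u}\|^{2}=T\}$, and it suffices to show this maximum is attained.

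First I would establish $F\neq\emptyset$. The minimum-norm solution of $L\mathbf{u}=\mathbf{r}(T)$ is $\mathbf{u}_{0}=L^{*}W_{c}^{-1}(T)\mathbf{r}(T)$, orthogonal to $\ker L$, with $\|\mathbf{u}_{0}\|^{2}=\mathbf{r}^{'}(T)W_{c}^{-1}(T)\mathbf{r}(T)$. Since $\mathrm{range}\,L\subseteq\mathbb{R}^{n}$ is finite dimensional, $\ker L$ is infinite dimensional, so we may pick a unit $\mathbf{w}\in\ker L$; as $\delta:=T-\mathbf{r}^{'}(T)W_{c}^{-1}(T)\mathbf{r}(T)>0$ by (\ref{eq7}), the control $\mathbf{u}_{0}+\sqrt{\delta}\,\mathbf{w}$ lies in $F$. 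Next, relax $F$ to $\widetilde{F}:=\{\mathbf{u}:L\mathbf{u}=\mathbf{r}(T),\ \|\mathbf{u}\|^{2}\le T\}$, the intersection of a closed affine subspace with a closed ball: this is convex, bounded and weakly closed, hence weakly compact in $\mathcal{H}$, and non-empty by the above. The linear functional being weakly continuous, it attains a maximum over $\widetilde{F}$ at some $\mathbf{u}^{\star}$.

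It remains to show $\mathbf{u}^{\star}$ actually lies on the sphere, i.e. $\|\mathbf{u}^{\star}\|^{2}=T$. Set $\boldsymbol{\phi}:=\bar{\mathbf{v}}-L^{*}W_{c}^{-1}(T)\mathbf{s}(T)$ with $\mathbf{s}(T)$ as in (\ref{eq8a}); since $L\bar{\mathbf{v}}=\mathbf{s}(T)$ we have $L\boldsymbol{\phi}=0$, and a short computation using $LL^{*}=W_{c}(T)$ and $\|\bar{\mathbf{v}}\|^{2}=T$ yields $\langle\bar{\mathbf{v}},\boldsymbol{\phi}\rangle=\|\boldsymbol{\phi}\|^{2}=T-\mathbf{s}^{'}(T)W_{c}^{-1}(T)\mathbf{s}(T)>0$ by (\ref{eq7}). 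If $\|\mathbf{u}^{\star}\|^{2}<T$, then for all small $\epsilon>0$ the perturbed control $\mathbf{u}^{\star}+\epsilon\boldsymbol{\phi}$ still satisfies $L(\mathbf{u}^{\star}+\epsilon\boldsymbol{\phi})=\mathbf{r}(T)$ and $\|\mathbf{u}^{\star}+\epsilon\boldsymbol{\phi}\|^{2}\le T$, hence lies in $\widetilde{F}$, yet $\tfrac{1}{T}\langle\bar{\mathbf{v}},\mathbf{u}^{\star}+\epsilon\boldsymbol{\phi}\rangle=\tfrac{1}{T}\langle\bar{\mathbf{v}},\mathbf{u}^{\star}\rangle+\tfrac{\epsilon}{T}\|\boldsymbol{\phi}\|^{2}$ strictly exceeds the maximum, a contradiction. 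Therefore $\mathbf{u}^{\star}\in F$, and since $F\subseteq\widetilde{F}$ it solves (\ref{eq6}).

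The main obstacle is that $F$ itself --- a sphere intersected with an affine subspace --- is not weakly closed in the infinite-dimensional space $\mathcal{H}$, so one cannot invoke a Weierstrass-type argument on $F$ directly; the workaround is precisely the relaxation to the weakly compact ball-section $\widetilde{F}$ followed by the ``push to the boundary'' along $\boldsymbol{\phi}$, which is where the second term in (\ref{eq7}) enters: $T>\mathbf{s}^{'}(T)W_{c}^{-1}(T)\mathbf{s}(T)$ is exactly the condition $\boldsymbol{\phi}\neq 0$, i.e. that the prior input $\bar{\mathbf{v}}$ is not wholly aligned with $\mathrm{range}\,L^{*}$. (In the excluded borderline case $\mathbf{s}^{'}W_{c}^{-1}\mathbf{s}=T$ the objective is constant on $\{L\mathbf{u}=\mathbf{r}(T)\}$ and a separate, easier argument handles existence; the lemma asserts only sufficiency.) One should also record that the standing assumption that $W_{c}(T)$ is invertible --- i.e. $(A,B)$ controllable on $[0,T]$ --- is implicit in (\ref{eq7}). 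An alternative, constructive route is to write the Lagrangian stationarity condition for (\ref{eq6}), which forces $\mathbf{u}=\tfrac{1}{2\lambda}\boldsymbol{\phi}+\mathbf{u}_{0}$ for a scalar multiplier $\lambda$, and to verify that (\ref{eq7}) makes the resulting scalar equation for $\lambda$ solvable with $\lambda\neq 0$; this already exhibits the closed form used later, at the cost of separately arguing global optimality.
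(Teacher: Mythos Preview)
Your argument is correct and takes a genuinely different route from the paper. The paper proceeds constructively: it adjoins the energy constraint as a state $y$ with $\dot y=\tfrac{1}{T}\|\mathbf{u}\|_2^2$, writes the Hamiltonian, derives the Euler--Lagrange (costate) equations, and solves them to obtain the candidate control $\mathbf{u}(t)=\tfrac{1}{2\mu}\mathbf{v}(t-T)-\tfrac{T}{2\mu}B' e^{-A't}\boldsymbol{\lambda}(0)$; substituting into the terminal and energy constraints yields a scalar equation for the multiplier, $\mu^2=\tfrac{1}{4}\,\dfrac{T-\mathbf{s}'W_c^{-1}\mathbf{s}}{T-\mathbf{r}'W_c^{-1}\mathbf{r}}$, and condition (\ref{eq7}) is precisely what makes this real (the alternative $T<\min\{\cdot\}$ being ruled out by the minimum-energy interpretation in Remark~\ref{rem4}). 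Your approach instead relaxes the sphere constraint to a ball, invokes weak compactness of the resulting convex set in $L^2$ to get a maximizer of the linear objective, and then pushes the maximizer to the boundary along the kernel direction $\boldsymbol{\phi}=\bar{\mathbf{v}}-L^{*}W_c^{-1}\mathbf{s}$, with $\|\boldsymbol{\phi}\|^2=T-\mathbf{s}'W_c^{-1}\mathbf{s}>0$ supplying the strict improvement. The trade-off is clear: the paper's route immediately produces the closed-form control used downstream in Lemma~\ref{lem2}, but strictly speaking only exhibits a stationary point rather than proving a global optimum exists; your route is a clean, self-contained existence proof that transparently separates the roles of the two halves of (\ref{eq7}) (feasibility from $T>\mathbf{r}'W_c^{-1}\mathbf{r}$, boundary attainment from $T>\mathbf{s}'W_c^{-1}\mathbf{s}$), at the cost of not directly delivering the explicit formula---though you correctly note that the Lagrangian stationarity calculation recovers it.
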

Recall that by our formulation, $T$ is the total energy available to the system (\ref{eq1}). 

\begin{rem}\label{rem4}
The arguments $\mathbf{s}^{'}(T)W_{c}^{-1}(T)\mathbf{s}(T)$ and $\mathbf{r}^{'}(T)W_{c}^{-1}(T)\mathbf{r}(T)$ in (\ref{eq7}) are the minimum energy required to drive the system (\ref{eq1}) from $\mathbf{x}(-T)$ to $\mathbf{x}(0)$ and $\mathbf{x}(0)$ to $\mathbf{x}(T)$ respectively \cite{KS72,CSL14}.
\end{rem}

%\begin{rem}\label{Rem4}
%	We note here that $T$ is the total energy available to the system (\ref{eq1}). $\mathbf{s}^{'}(T)W_{c}^{-1}(T)\mathbf{s}(T)$ and $\mathbf{r}^{'}(T)W_{c}^{-1}(T)\mathbf{r}(T)$ are the minimum energy required to drive the system (\ref{eq1}) from $\mathbf{x}(-T)$ to $\mathbf{x}(0)$ and $\mathbf{x}(0)$ to $\mathbf{x}(T)$ respectively.
%\end{rem}

\begin{proof}
Define $y(t)$ as 
\begin{equation}
	y(t) = \frac{1}{T}\int_{0}^{t}\| \mathbf{u}(\tau)\|_{2}^{2}\rm{d}\tau 
\label{eq10}
\end{equation}
Clearly, $y(0) = 0$ and $y(T) = 1$ from (\ref{eq6b}). Thus, we can replace the constraint (\ref{eq6b}) by
\begin{equation}
	y(T) = 1
\label{eq11}
\end{equation}
\noindent In differential form, we can write (\ref{eq10}) as
\begin{equation}
	\frac{\rm{d}y(t)}{\rm{d}t} = \frac{1}{T}\| \mathbf{u}(t)\|_{2}^{2}
\label{eq12}
\end{equation}

\noindent To solve the dynamic optimization problem (\ref{eq6a}), (\ref{eq6c}) and (\ref{eq11}) in continuous time, we write the Hamiltonian $\mathcal{H}(\mathbf{x}(t),y(t),\mathbf{u}(t),\mathbf{\lambda}(t),\mu(t),t)$ as
\begin{multline}
	\mathcal{H}(\mathbf{x}(t),y(t),\mathbf{u}(t),\mathbf{\lambda}(t),\mu(t),t) = -\frac{1}{T} \mathbf{v}^{'}(t-T)\mathbf{u}(t) \\
	+ \mathbf{\lambda}^{'}(t)(A\mathbf{x}(t)+B\mathbf{u}(t)) \\
	+ \frac{\mu(t)}{T}\| \mathbf{u}(t)\|_{2}^{2}
\label{eq13}
\end{multline}
\noindent Here, $\mathbf{\lambda}(t)$ and $\mu(t)$ are the costate variables associated with the dynamics (\ref{eq1}) and (\ref{eq12}) respectively. We derive the following optimality conditions (i.e. the Euler-Lagrange equations \cite{K04}): 
\begin{subequations}
	\begin{equation}
		\frac{\rm{d}\mathbf{\lambda}(t)}{\rm{d}t} = -(\frac{\partial \mathcal{H}(\mathbf{x}(t),y(t),\mathbf{u}(t),\mathbf{\lambda}(t),\mu(t),t)}{\partial \mathbf{x}(t)})^{'} = -A^{'}\mathbf{\lambda}(t)
	\label{eq14a}
	\end{equation}
	\begin{equation}
		\frac{\rm{d} \mu(t)}{\rm{d}t} = -\frac{\partial  \mathcal{H}(\mathbf{x}(t),y(t),\mathbf{u}(t),\mathbf{\lambda}(t),\mu(t),t)}{\partial y(t)}= 0
	\label{eq14b}
	\end{equation}
	\begin{equation}
		\begin{split}
			\frac{\partial \mathcal{H}(\mathbf{x}(t),y(t),\mathbf{u}(t),\mathbf{\lambda}(t),\mu(t),t)}{\partial \mathbf{u}(t)} &= 0 \\
		&= \frac{2\mu(t)}{T}\mathbf{u}(t) - \frac{1}{T}\mathbf{v}(t-T) \\
		&+ B^{'}\mathbf{\lambda}
	\end{split}
	\label{eq14c}
	\end{equation}
\label{eq14}
\end{subequations}

\noindent By integrating the costate equations (\ref{eq14a}) and (\ref{eq14b}) over $t$, we obtain
\begin{subequations}
	\begin{equation}
		\mathbf{\lambda}(t) = e^{-A^{'}t}\mathbf{\lambda}(0)
	\label{eq15a}
	\end{equation}
	\begin{equation}
		\mu(t) \equiv \mu \qquad \forall t\in[0,T]
	\label{eq15b}
	\end{equation}
\label{eq15}
\end{subequations}

\noindent Here, $\mathbf{\lambda}(0)$ is the initial condition (at $t=0$) of (\ref{eq14a}). From (\ref{eq14c}), (\ref{eq15a}) and (\ref{eq15b}), we derive the optimal control law as
\begin{equation}
	\mathbf{u}(t) = \frac{1}{2\mu}\mathbf{v}(t-T) - \frac{T}{2\mu}B^{'}e^{-A^{'}t}\mathbf{\lambda}(0)
\label{eq16}
\end{equation}

\noindent By substituting (\ref{eq16}) into (\ref{eq6c}), we obtain $\mathbf{\lambda}(0)$ as
\begin{equation}
	\mathbf{\lambda}(0) = e^{A'T}W_{c}^{-1}(T)(\frac{1}{T}\mathbf{s}(T) - \frac{2\mu}{T}\mathbf{r}(t))
\label{eq17}
\end{equation}

\noindent  By substituting (\ref{eq16}) and (\ref{eq17}) in (\ref{eq11}) and using (\ref{eq2}), we obtain
\begin{equation}
	\mu = \pm\frac{1}{2} \sqrt{\frac{T  - \mathbf{s}^{'}(T)W_{c}^{-1}(T)\mathbf{s}(T)}{T -\mathbf{r}^{'}(T)W_{c}^{-1}(T)\mathbf{r}(T)}}
\label{eq18}
\end{equation}

\noindent For the existence of a solution, $\mu$ must be a real number. Thus, either $T < \min{\{\mathbf{s}^{'}(T)W_{c}^{-1}(T)\mathbf{s}(T),\mathbf{r}^{'}(T)W_{c}^{-1}(T)\mathbf{r}(T)\}}$ or $T > \max{\{\mathbf{s}^{'}(T)W_{c}^{-1}(T)\mathbf{s}(T),\mathbf{r}^{'}(T)W_{c}^{-1}(T)\mathbf{r}(T)\}}$. Now it follows directly from Remark \ref{rem4} that the total energy $T$ must satisfy (\ref{eq7}) for the existence of a solution i.e. $T > \max{\{\mathbf{s}^{'}(T)W_{c}^{-1}(T)\mathbf{s}(T),\mathbf{r}^{'}(T)W_{c}^{-1}(T)\mathbf{r}(T)\}}$. 

\end{proof}

\subsection{Uniqueness of the Minimally Novel Input}

\begin{lemma}\label{lem2}
	Under the hypothesis of Lemma \ref{lem1}, the solution of the non-convex optimization problem (\ref{eq6}) is unique.
\end{lemma}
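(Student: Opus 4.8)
The plan is to build directly on the proof of Lemma~\ref{lem1}. There it is shown that the Euler--Lagrange conditions (\ref{eq14}), which are necessary for optimality, force every feasible stationary point of (\ref{eq6}) into the closed form (\ref{eq16})--(\ref{eq17}), with the scalar multiplier $\mu$ pinned down by (\ref{eq18}) up to its sign. Under the hypothesis (\ref{eq7}) the radicand in (\ref{eq18}) is strictly positive, so $\mu\neq 0$ and the two admissible signs produce exactly two well-defined candidate inputs, $\mathbf{u}_{+}(t)$ (from $\mu>0$) and $\mathbf{u}_{-}(t)$ (from $\mu<0$). Since Lemma~\ref{lem1} guarantees that a global minimizer of (\ref{eq6}) exists and every global minimizer satisfies (\ref{eq14}), the minimizer must be $\mathbf{u}_{+}$ or $\mathbf{u}_{-}$. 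Uniqueness then reduces to showing that these two candidates realize \emph{different} values of the cost $-\mathbb{J}(T)$.

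To carry this out, set $p=\mathbf{s}^{'}(T)W_{c}^{-1}(T)\mathbf{s}(T)$, $q=\mathbf{r}^{'}(T)W_{c}^{-1}(T)\mathbf{r}(T)$ and $c=\mathbf{s}^{'}(T)W_{c}^{-1}(T)\mathbf{r}(T)$, and substitute (\ref{eq17}) into (\ref{eq16}) to write the candidate input as $\mathbf{u}(t)=\tfrac{1}{2\mu}\mathbf{g}(t)+\mathbf{h}(t)$, where $\mathbf{g}(t)=\mathbf{v}(t-T)-B^{'}e^{A^{'}(T-t)}W_{c}^{-1}(T)\mathbf{s}(T)$ and $\mathbf{h}(t)=B^{'}e^{A^{'}(T-t)}W_{c}^{-1}(T)\mathbf{r}(T)$. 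A short computation using (\ref{eq8})--(\ref{eq9}) and the normalization (\ref{eq2}) shows that $\mathbf{g}$ and $\mathbf{h}$ are orthogonal in $L^{2}([0,T])$, that $\tfrac{1}{T}\int_{0}^{T}\|\mathbf{g}(t)\|_{2}^{2}\,\mathrm{d}t=1-\tfrac{p}{T}$ and $\tfrac{1}{T}\int_{0}^{T}\|\mathbf{h}(t)\|_{2}^{2}\,\mathrm{d}t=\tfrac{q}{T}$, and that $\tfrac{1}{T}\int_{0}^{T}\mathbf{v}^{'}(t-T)\mathbf{g}(t)\,\mathrm{d}t=1-\tfrac{p}{T}$ and $\tfrac{1}{T}\int_{0}^{T}\mathbf{v}^{'}(t-T)\mathbf{h}(t)\,\mathrm{d}t=\tfrac{c}{T}$. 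Imposing the energy constraint (\ref{eq6b}) recovers (\ref{eq18}); substituting back into (\ref{eq3}) and eliminating $\mu$ then gives, for the value of the metric at the two candidates,
\begin{equation*}
\mathbb{J}^{\pm}(T)=\frac{1}{T}\left(c\pm\sqrt{(T-p)(T-q)}\right).
\end{equation*}

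By (\ref{eq7}) both $T-p>0$ and $T-q>0$, so the square-root term is strictly positive and hence $\mathbb{J}^{+}(T)>\mathbb{J}^{-}(T)$, i.e. $-\mathbb{J}^{+}(T)<-\mathbb{J}^{-}(T)$. Therefore the cost $-\mathbb{J}(T)$ attains its global minimum only at $\mathbf{u}_{+}$, which proves uniqueness; incidentally this also singles out $\mathbf{u}_{+}$ (the $\mu>0$ branch) as the minimally novel input. The step I expect to carry the real weight is not the algebra above but the appeal to necessity of (\ref{eq14}): one must ensure that no optimal input escapes the two-element candidate set -- in particular that the degenerate case $\mu=0$ is impossible and that the endpoint map in (\ref{eq6c}) is surjective (equivalently, $W_{c}(T)$ invertible, which is already tacit since $W_{c}^{-1}(T)$ appears throughout) -- and it is precisely the strict inequalities in (\ref{eq7}), which via (\ref{eq2}) amount to $\mathbf{v}(t-T)$ having a nonzero component orthogonal to the range of that map, that close this gap; as a consistency check, if either inequality in (\ref{eq7}) degenerated to an equality uniqueness could genuinely fail, since the cost becomes constant on the feasible set when $T=p$ and the feasible set collapses to a point when $T=q$. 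A self-contained alternative bypasses Pontryagin altogether: decomposing $\mathbf{u}$ orthogonally into the fixed minimum-energy component realizing $\mathbf{r}(T)$ and a free component in the kernel of the endpoint map -- on which (\ref{eq6b}) restricts to a sphere of radius $\sqrt{T-q}>0$ -- reduces maximization of $\mathbb{J}(T)$ to a Cauchy--Schwarz problem whose maximizer is unique exactly because the kernel-component of $\mathbf{v}(t-T)$ is nonzero.
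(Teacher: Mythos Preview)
Your argument is correct and follows essentially the same route as the paper: substitute the stationary-point form (\ref{eq16})--(\ref{eq17}) into the cost (\ref{eq3}), obtain $\mathbb{J}(T)=\tfrac{c}{T}+\tfrac{1}{2\mu}(1-\tfrac{p}{T})$ (the paper's (\ref{eq19})), and use the strict inequality $T>p$ from (\ref{eq7}) to conclude that the two sign choices of $\mu$ yield distinct costs, with $\mu>0$ winning. Your version is more explicit---you eliminate $\mu$ to display $\mathbb{J}^{\pm}(T)=\tfrac{1}{T}\bigl(c\pm\sqrt{(T-p)(T-q)}\bigr)$, and you add useful remarks on why $\mu=0$ is excluded, on the tacit invertibility of $W_c(T)$, and on the degenerate cases $T=p$ and $T=q$---but the core idea is identical to the paper's.
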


\begin{proof}
\noindent By substituting (\ref{eq16}) and (\ref{eq17}) in (\ref{eq3}), we obtain the optimal value of $\mathbb{J}(T)$ as a function of $\mu$ as
\begin{multline}
	\mathbb{J}(T) = \frac{1}{T}\mathbf{s}^{'}(T)W_{c}^{-1}(T)\mathbf{r}(T)\\
	+\frac{1}{2\mu}(1-\frac{1}{T}\mathbf{s}^{'}(T)W_{c}^{-1}(T)\mathbf{s}(T))
\label{eq19}
\end{multline}

\noindent It follows from Lemma (\ref{lem1}) that $\frac{1}{T}\mathbf{s}^{'}(T)W_{c}^{-1}(T)\mathbf{s}(T) \in (0,1)$ (see (\ref{eq7})). Thus, the maximum of $\mathbb{J}(T)$ occurs when $\mu > 0$ in (\ref{eq18}) i.e.  
\begin{equation}
	\mu = \frac{1}{2} \sqrt{\frac{T  - \mathbf{s}^{'}(T)W_{c}^{-1}(T)\mathbf{s}(T)}{T -\mathbf{r}^{'}(T)W_{c}^{-1}(T)\mathbf{r}(T)}}
\label{eq20}
\end{equation}

\noindent Thus, a unique optimal control input $\mathbf{u}(t)$ exists and is given by
\begin{multline}
	\mathbf{u}(t) = \frac{1}{2\mu}(\mathbf{v}(t-T)-B^{'}e^{A^{'}(T-t)}W_{c}^{-1}(T)\mathbf{s}(T))\\
	+ B^{'}e^{A^{'}(T-t)}W_{c}^{-1}(T)\mathbf{r}(T)
\label{eq21}
\end{multline}
\end{proof}

\subsection{Euclidean - Inner Product Equivalence}
\noindent As noted in Remark \ref{rem3}, it is an interesting and notable consequence of our cost formulation that the problem can exactly recast in terms of a Euclidean norm. Specifically, if we consider 

\begin{equation}
\mathbb{J}_{1}(T) = \frac{1}{T}\int_{0}^{T} \|\mathbf{v}(t-T)-\mathbf{u}(t)\|_{2}^{2}\rm{d}t
\label{eq22}
\end{equation}

\noindent  as the cost function in (\ref{eq6a}), we obtain the optimal solution as
\begin{subequations}
	\begin{equation}
		\mu = -1 + \sqrt{\frac{T  - \mathbf{s}^{'}(T)W_{c}^{-1}(T)\mathbf{s}(T)}{T -\mathbf{r}^{'}(T)W_{c}^{-1}(T)\mathbf{r}(T)}}
	\label{eq23a}
	\end{equation}
	\begin{multline}
		\mathbf{u}(t) = \frac{1}{1+\mu}(\mathbf{v}(t-T)-B^{'}e^{A^{'}(T-t)}W_{c}^{-1}(T)\mathbf{s}(T))\\
	+ B^{'}e^{A^{'}(T-t)}W_{c}^{-1}(T)\mathbf{r}(T)
	\label{eq23b}
	\end{multline}
	\begin{multline}
		\mathbb{J}_{1}(T) = 2(1-\frac{1}{T}\mathbf{s}^{'}(T)W_{c}^{-1}(T)\mathbf{r}(T))\\
		+\frac{2}{1+\mu}(\frac{1}{T}\mathbf{s}^{'}(T)W_{c}^{-1}(T)\mathbf{s}(T)-1)
	\label{eq23c}
	\end{multline}
\label{eq23}
\end{subequations} 

\noindent It is evident that the control law (\ref{eq20})-(\ref{eq21}) is same as the control law (\ref{eq23a})-(\ref{eq23b}), as one expects from Remark \ref{rem3}.

\section{Example}\label{sec4}
\noindent \noindent We consider a recurrent network of $n$ neurons with linearized firing rate dynamics of the form \cite{DA01} 
\begin{equation}
	S\frac{\rm{d}\mathbf{x}(t)}{\rm{d}t} = -\mathbf{x}(t) + W \mathbf{x}(t) + B\mathbf{u}(t)
\label{eq24}
\end{equation}

\noindent Here, $\mathbf{x}(t) \in \mathbb{R}_{+}^{n\times 1}$ represents the firing rate of the neurons at time $t$, $S \in \mathbb{R}_{+}^{n\times n}$ is a diagonal matrix whose diagonal elements are the (positive) time constants of the neurons, $W \in \mathbb{R}^{n\times n}$ defines the interaction among neurons in the network (weight matrix), $B \in \mathbb{R}_{+}^{n\times n}$ is the input matrix, and $\mathbf{u}(t)\in \mathbb{R}_{+}^{n\times 1}$ is the afferent input. Since $S$ is invertible, (\ref{eq24}) can be represented in the form of (\ref{eq1}) by considering $A = S^{-1}(-I+W)$ where $I$ is the $n\times n$ identity matrix. 

For illustrative purposes, we consider a recurrent network of $n = 100$ neurons where $80$ neurons are excitatory and every $5^{th}$ neuron is inhibitory. We choose the time constants (in ms) of the neurons, i.e. the elements of the diagonal matrix $S$, from a uniform distribution $\mathcal{U}(5,10)$. For every excitatory neuron $i$, we choose the connectivity weight $w_{i,j}$ (in essence, a time constant for excitation from the neuron $i$ to $j$) from a uniform distribution $\mathcal{U}(0,1)$. Similarly, for every inhibitory neuron $i$, we choose the connectivity weight $w_{i,j}$ (from the neuron $i$ to $j$) from a uniform distribution $\mathcal{U}(-1,0)$. We assume that $w_{i,j} = 0$ for $i=j$, i.e. neurons do not possess direct feedback. Assuming $B$ as an identity matrix, we proceed to compute the minimum directional change in inputs (i.e. minimally novel inputs) required to make a desired directional change in firing rates of neurons using (\ref{eq8})-(\ref{eq9}), (\ref{eq19})-(\ref{eq21}).

To complete the example, we specify $T = 3$ ms. The initial and terminal states $\mathbf{x}(0)$ and $\mathbf{x}(T)$, respectively, are specified to satisfy $\|\mathbf{x}(0)\|_{2} = \|\mathbf{x}(T)\|_{2} = 1$ with $\mathbf{x}(0)'\mathbf{x}(T) = \gamma$, where in this particular case we specify $\gamma = 0.7645$. The prior input $\mathbf{v}(t-T)$ is specified to be constant over the interval $t\in[0,T]$. Figure \ref{fig3} illustrates the outcome of the example for $n=1000$ random realizations of the system. Each red dot on the figure depicts the novelty associated with the solution to (\ref{eq18})-(\ref{eq19}), i.e., the minimum novelty. Note, again, that by formulation, these inputs all have unit average energy. Each blue dot corresponds to the minimum energy solution. As a verification of our theoretical development, we note that the minimum energy solution consistently requires an injection of novelty (angular orientation) relative to the prior input and relative to the optimum.

\begin{figure}[ht]
\centering%
\includegraphics[width=0.48\textwidth]{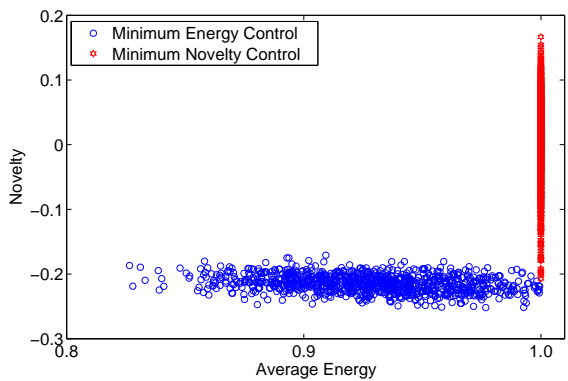}
\caption{Comparison of minimum novelty control with minimum energy control for $n=1000$ random realizations of the recurrent neuronal network: Each red dot on the figure depicts the novelty associated with the solution to the minimum novelty control. Each blue dot corresponds to the minimum energy solution.}
\label{fig3}
\end{figure}

\section{Conclusions and Future Work}
\noindent In this paper, we have introduced a systems-theoretic analysis to characterize the minimum input novelty required to effect a change to the trajectory of a linear system. We have focused this paper on introducing the key conceptual notion and on exact analytical characterization of the minimum novelty solution for the case of linear systems. Naturally, several extensions are possible and some are immediate. For instance, the analysis readily extends to the case of linear-time varying systems, with appropriate replacement of the static $A$ and $B$ matrices with their time-varying equivalents in the controllability gramian. Depending on the domain example at hand, one may also modify the novelty metric itself, for instance by weighting novelty in certain segments of the state-space.

The compelling aspect of this analysis is its direct interpretability in the context of sensory neuronal networks where, as stated in the Introduction, energy alone does not provide a full controllability characterization. With suitable adaptation, it is expected that the analysis herein can be used for both the analysis of biophysical neuronal networks in clinically relevant regimes \cite{Ching2014} and, eventually, for design and synthesis of sensory inputs \cite{KSTK13}.

\section{Acknowledgement} 
\noindent S. Ching Holds a Career Award at the Scientific Interface from the Burroughs-Wellcome Fund.

\IEEEtriggeratref{2}
\bibliographystyle{IEEEtran}
\bibliography{masterN}

\begin{thebibliography}{10}
\providecommand{\url}[1]{#1}
\csname url@rmstyle\endcsname
\providecommand{\newblock}{\relax}
\providecommand{\bibinfo}[2]{#2}
\providecommand\BIBentrySTDinterwordspacing{\spaceskip=0pt\relax}
\providecommand\BIBentryALTinterwordstretchfactor{4}
\providecommand\BIBentryALTinterwordspacing{\spaceskip=\fontdimen2\font plus
\BIBentryALTinterwordstretchfactor\fontdimen3\font minus
  \fontdimen4\font\relax}
\providecommand\BIBforeignlanguage[2]{{%
\expandafter\ifx\csname l@#1\endcsname\relax
\typeout{** WARNING: IEEEtran.bst: No hyphenation pattern has been}%
\typeout{** loaded for the language `#1'. Using the pattern for}%
\typeout{** the default language instead.}%
\else
\language=\csname l@#1\endcsname
\fi
#2}}

\bibitem{Hu2001}
\BIBentryALTinterwordspacing
G.~Hu and E.~Davison, ``A real radius measure for controllability,'' in
  \emph{American Control Conference, 2001. Proceedings of the 2001}, vol.~4,
  2001, pp. 3144--3148. [Online]. Available:
  \url{http://ieeexplore.ieee.org/stamp/stamp.jsp?arnumber=946404}
\BIBentrySTDinterwordspacing

\bibitem{HD04}
G.~Hu and E.~J. Davison, ``Real controllability/stabilizability radius of lti
  systems,'' \emph{IEEE Transactions on Automatic Control}, vol.~49, no.~2, pp.
  254--257, 2004.

\bibitem{hinrichsen1989real}
\BIBentryALTinterwordspacing
D.~Hinrichsen and A.~Pritchard, \emph{Real and Complex Stability Radii: A
  Survey}, ser. Institut f{\"u}r Dynamische Systeme, Universit{\"a}t
  Bremen.\hskip 1em plus 0.5em minus 0.4em\relax Inst. f{\"u}r Dynam. Systeme,
  1989. [Online]. Available:
  \url{http://books.google.com/books?id=EpcCHAAACAAJ}
\BIBentrySTDinterwordspacing

\bibitem{YRCHL12}
\BIBentryALTinterwordspacing
G.~Yan, J.~Ren, Y.-C. Lai, C.-H. Lai, and B.~Li, ``Controlling complex
  networks: How much energy is needed?'' \emph{Phys. Rev. Lett.}, vol. 108, p.
  218703, May 2012. [Online]. Available:
  \url{http://link.aps.org/doi/10.1103/PhysRevLett.108.218703}
\BIBentrySTDinterwordspacing

\bibitem{PZB14}
F.~Pasqualetti, S.~Zampieri, and F.~Bullo, ``Controllability metrics,
  limitations and algorithms for complex networks,'' \emph{IEEE Transactions on
  Control of Network Systems}, vol.~1, no.~1, pp. 40--52, 2014.

\bibitem{FZ14}
\BIBentryALTinterwordspacing
F.~Pasqualetti and S.~Zampieri, ``On the controllability of isotropic and
  anisotropic networks,'' 2014. [Online]. Available:
  \url{http://www.fabiopas.it/papers/FP-SZ-14.pdf}
\BIBentrySTDinterwordspacing

\bibitem{CSL14}
F.~L. Cortesi, T.~H. Summers, and J.~Lygeros, ``Submodularity of energy related
  controllability metrics,'' \emph{ArXiv preprint arXiv:1403.6351}, 2014.

\bibitem{KS72}
H.~Kwakernaak and R.~Sivan, \emph{Linear optimal control systems}.\hskip 1em
  plus 0.5em minus 0.4em\relax Wiley-Interscience, 1972.

\bibitem{Ching2013}
\BIBentryALTinterwordspacing
S.~Ching and J.~T. Ritt, ``\BIBforeignlanguage{eng}{Control strategies for
  underactuated neural ensembles driven by optogenetic stimulation.}''
  \emph{\BIBforeignlanguage{eng}{Front Neural Circuits}}, vol.~7, p.~54, 2013.
  [Online]. Available: \url{http://dx.doi.org/10.3389/fncir.2013.00054}
\BIBentrySTDinterwordspacing

\bibitem{Ching2012}
S.~Ching, E.~N. Brown, and M.~A. Kramer, ``\BIBforeignlanguage{eng}{Distributed
  control in a mean-field cortical network model: implications for seizure
  suppression.}'' \emph{\BIBforeignlanguage{eng}{Phys Rev E Stat Nonlin Soft
  Matter Phys}}, vol.~86, no. 2 Pt 1, p. 021920, Aug 2012.

\bibitem{Lepage2013a}
\BIBentryALTinterwordspacing
K.~Q. Lepage, S.~Ching, and M.~A. Kramer, ``\BIBforeignlanguage{eng}{Inferring
  evoked brain connectivity through adaptive perturbation.}''
  \emph{\BIBforeignlanguage{eng}{J Comput Neurosci}}, vol.~34, no.~2, pp.
  303--318, Apr 2013. [Online]. Available:
  \url{http://dx.doi.org/10.1007/s10827-012-0422-8}
\BIBentrySTDinterwordspacing

\bibitem{K04}
D.~E. Kirk, \emph{Optimal Control Theory: An Introduction}.\hskip 1em plus
  0.5em minus 0.4em\relax Dover, 2004.

\bibitem{DA01}
P.~Dayan and L.~F. Abbott, \emph{Theoretical Neuroscience}.\hskip 1em plus
  0.5em minus 0.4em\relax The MIT Press, 2001.

\bibitem{Ching2014}
\BIBentryALTinterwordspacing
S.~Ching and E.~N. Brown, ``\BIBforeignlanguage{eng}{Modeling the dynamical
  effects of anesthesia on brain circuits.}''
  \emph{\BIBforeignlanguage{eng}{Curr Opin Neurobiol}}, vol.~25, pp. 116--122,
  Apr 2014. [Online]. Available:
  \url{http://dx.doi.org/10.1016/j.conb.2013.12.011}
\BIBentrySTDinterwordspacing

\bibitem{KSTK13}
G.~Kumar, M.~H. Schieber, N.~V. Thakor, and M.~V. Kothare, ``Designing
  closed-loop brain-machine interfaces using optimal receding horizon
  control,'' \emph{In the Proceedings of the 2013 American Control Conference,
  Washington D.C., DC}, pp. 5029--5034, 2013.

\end{thebibliography}

%\begin{thebibliography}{99}

%\end{thebibliography}
\end{document}